\newtheorem{Theorem}{Theorem}[section] \newtheorem{Lemma}[Theorem]{Lemma}
\newtheorem{Corollary}[Theorem]{Corollary} 
\newtheorem{Remark}[Theorem]{Remark}
\def\bc{\bold C}
\def\vt{\vert}
\def\fr{\frac}
\begin{document}
\title{A note on  function algebras on disks }
\author  {Kieu Phuong Chi and Mai The Tan}
\address{Kieu Phuong Chi,  \newline
Department of Mathematics and Applications, Saigon  University, 273  An Duong Vuong, Ho Chi Minh City, Vietnam}
\email{\href{mailto:kieuphuongchi@sgu.edu.vn}{kieuphuongchi@sgu.edu.vn}}
   \address{Mai The Tan,  \newline
   Department of Mathematics and Applications, Saigon  University, 273  An Duong Vuong, Ho Chi Minh City, Vietnam}
   \email{\href{mailto:mttan.q11@hcm.edu.vn}{mttan.q11@hcm.edu.vn}}

\subjclass[2010]{32E20, 32F05}
 \keywords{polynomial convexity, rational convexity, function algebra, peak sets.}

\begin{abstract} Let $D$ be a closed disk in the complex plane centered at the origin,  $f, g$  complex valued continuous function on $D$. Let $P[f,g; D]$ (res. $R[f, g; D])$) be the uniform closure on $D$ of polynomials (res. rational functions) in variables $f$ and $g$. In \cite{OS}, using complex dynamical systems,  O'Farrell and Sanabria-Garcia proved  that $\{\Big(z^2, \cfrac{\overline z}{1+\overline{z}}\Big): z\in D\}$  is not polynomially convex with $D$ small enough and so that  $P[z^2,\cfrac{\overline z}{1+\overline z}; D]\ne C(D)$ if $D$ is sufficient small. In this paper, we first give a certain conditions for rational convexity of union of two compact set of $\Bbb C^n$ and apply to show that  $R[z^2, \cfrac{\overline z}{1+\overline z}; D]= C(D)$ for all $D$ small enough.
 \end{abstract}

\maketitle

\section{Introduction and statement of results}

In this note, we refine techniques from previous work of both authors \cite{DC,P1,P2,PW}..., to
obtain a new approximation result in the theory of function algebras.
In the papers \cite{D1,DP,DC,PW} the following situation is investigated. Let the function $f$ be a $\mathcal C^1$ function defined in a neighborhood of the origin in the complex plane which satisfies
$f(0)= 0,$ $\cfrac{\partial f}{\partial z}(0) = 0$ , $\cfrac{\partial f}{\partial \overline z}(0)=1$, (i.e., $f$ looks like $\overline z$ near $0$), and such that $z^m$ and $f^n$ separate points near $0$, where $m, n$ is positive integers. Let $D$ be a small closed disk in the complex plane, centered at the origin. Is it possible to uniformly approximate every continuous function on $D$ by polynomials in $z^m$ and $f^n$? It is shown that both answers yes and no are possible. Using some technical lemmas and the well-known result of H\"ormander and Wermer on local approximation in totally real manifolds, De Paepe proved that $P[z^m, f^n; D]=C(D)$ if $m, n$ are coprime. If $m$ and $n$ are not coprime (and the generators separate points near the origin), then one can reduce the problem to the case where $m = n$. Using polynomial convexity theory, it can be shown that $[z^2, f^2;D] = C(D)$ for some choices of $f$ (see \cite{P2,P3,PW}...). In \cite{P3}, the nice example are given to show that for $m = n = 2$, the answer to our original question can be negative. More precisely,   $P[z^2,f^2;D]\ne C(D)$ for $f(z)=\cfrac{\overline z}{1+\overline z}$ for sufficiently small disk $D$. The crucial point in showing that $P[z^m, f^n; D]= C(D)$ is to show that a certain set which is the union of two polynomially convex disks is itself polynomially convex. For this, one have to apply appropriate  tool of Kallin's lemma and some its modified forms.  The aim of this work is to expose the algebra $R[z^m, f^n; D]$. We would like to emphasize that $R[z^m, f^n; D]=C(D)$ when $P[z^m, f^n; D]=C(D)$. It now follows that the problem is meaningful in viewing $P[z^m, f^n; D]\ne C(D)$.  In \cite{OS}, the authors show that  all disc of the form $X = \{\big(z^2; f(\overline{z})\big) : z\in D\}$ is not polynomially convex, where $f $is holomorphic on $D$, and $f(z) = z^2 +a_3 z^3 +...$ with all coefficients $a_n$ real, and at least one $a_{2n+1}\ne 0$. By this fact, we can conclude that
$$P[z^2, \cfrac{\overline z}{1+\overline z}; D]\ne C(D).$$ In this work, we first prove the version  of Stout's theorem for rational function algebra and apply to  show that  the following
\begin{Theorem}\label{main4} If $D$ is a sufficiently small disk centered at the origin then
$$R[z^2, \cfrac{\overline z}{1+\overline z}; D]=C(D).$$
\end{Theorem}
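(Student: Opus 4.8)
The plan is to realize $C(D)$ as a rational function algebra by exhibiting the graph $X=\{(z^2,\tfrac{\overline z}{1+\overline z}):z\in D\}\subset\mathbb C^2$ as a rationally convex set on which every continuous function can be uniformly approximated by rational functions in the two coordinate projections. The central observation is that rational convexity is a strictly weaker requirement than polynomial convexity, so the very phenomenon that obstructs $P[z^2,\tfrac{\overline z}{1+\overline z};D]=C(D)$---namely the failure of $X$ to be polynomially convex, as established in \cite{OS}---need not obstruct the rational statement. Thus my first step is to invoke the rational-convexity criterion proved earlier in this paper (the ``version of Stout's theorem for rational function algebra''), which reduces the equality $R[z^2,\tfrac{\overline z}{1+\overline z};D]=C(D)$ to verifying that $X$ is rationally convex together with a local total-reality condition that permits approximation.

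Next I would set up the geometry. Write $w_1=z^2$ and $w_2=\tfrac{\overline z}{1+\overline z}$, and parametrize $X$ by $z\in D$. For $D$ sufficiently small the map $z\mapsto z^2$ folds the disk two-to-one, but the second coordinate $w_2$ behaves like $\overline z$ near the origin (indeed $\tfrac{\partial}{\partial\overline z}w_2(0)=1$ and $\tfrac{\partial}{\partial z}w_2(0)=0$), so the two generators separate points of $D\setminus\{0\}$ and the differential of the graphing map is injective away from the single branch point. This shows $X$ is, off the origin, a totally real $\mathcal C^1$ manifold of real dimension two embedded in $\mathbb C^2$; the origin is the only possible singular point, and there the local model is exactly the kind treated by the Hörmander--Wermer local approximation theorem on totally real manifolds. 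The approximation piece of the argument is therefore standard once total reality is in hand: one patches the Hörmander--Wermer local approximant near $0$ with a global argument on the rationally convex set.

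The heart of the matter is the rational convexity of $X$. I would try to produce, for every point $p\notin X$, a polynomial $Q$ with $Q(p)=0$ but $Q$ nonvanishing on $X$; equivalently, I would seek a plurisubharmonic or \emph{polynomial} function that is strictly positive on a neighborhood of $X$ except along a hypersurface avoiding the obstruction. Concretely, the natural candidate is to exploit the relation between $w_1=z^2$ and $|z|$ to build a function of the form $1-|z|^2=1-w_1\overline{w_1}$ adjusted by the $w_2$-coordinate; since $\tfrac{\overline z}{1+\overline z}$ encodes $\overline z$, one can hope to express $\overline z$ (hence $|z|^2=z\overline z$) rationally in the coordinates on $X$, which is precisely what rational---as opposed to polynomial---convexity allows. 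The key algebraic identity to extract is that $w_2/(1-w_2)=\overline z$, so that $\overline z$ is a rational function of $w_2$ on all of $X$, and consequently $z\overline z$ and any real-analytic expression in $z,\overline z$ become rational in the two generators.

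The step I expect to be the main obstacle is combining this rational expressibility of $\overline z$ with a Kallin-type or Oka--Weil-type decomposition to certify rational convexity globally rather than merely near $0$. Because $z\mapsto z^2$ is two-to-one, $X$ is naturally a union (or quotient) of two sheets meeting at the branch point, and I anticipate that the cleanest route is to split $D$ into two half-disks, establish rational convexity of each graph piece separately, and then apply the earlier union theorem of this paper to glue them---verifying the hypothesis that the relevant intersection or separating function exists. The delicate point is ensuring that the separating rational function does not vanish on $X$ while still separating the two sheets identified by $w_1=z^2$; here I would use the $w_2$-coordinate, which does distinguish $z$ from $-z$ for small $z\neq0$ since $\tfrac{\overline z}{1+\overline z}\neq\tfrac{-\overline z}{1-\overline z}$, to drive the gluing. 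Shrinking $D$ controls all error terms and guarantees total reality and the nonvanishing needed, which is why the conclusion is stated only for sufficiently small $D$.
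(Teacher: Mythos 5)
Your proposal breaks down at the one point where the entire difficulty of this problem is concentrated: the origin. You assert that $X=\{(z^2,\overline z/(1+\overline z)):z\in D\}$ is totally real away from the origin and that at the origin ``the local model is exactly the kind treated by the H\"ormander--Wermer theorem.'' This is false. Writing $\Phi(z)=\big(z^2,\overline z/(1+\overline z)\big)$, the differential at $0$ is $d\Phi_0(h)=(0,\overline h)$, so the tangent plane to $X$ at $(0,0)$ is $\{0\}\times\mathbb{C}$, a complex line. Hence $X$ fails to be totally real exactly at the origin, and H\"ormander--Wermer (which requires total reality) does not apply there; this complex tangent is precisely the source of the nontrivial polynomial hull found in \cite{OS}, so an argument that treats the origin as a routine point cannot be right. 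Your fallback decomposition of $X$ into the images of the two half-disks does not repair this: each piece still contains the complex-tangent point $(0,0)$, and each is a folded disc (its projection to the first coordinate is two-to-one over a radial slit), so neither piece is a totally real graph, condition (i) of the union theorem you want to invoke is unverified for the pieces, and you are left with a problem essentially as hard as the original. You have also mischaracterized the paper's Theorem 3.1: it is a Kallin--Stout type theorem about a \emph{union} $X_1\cup X_2$, not a criterion of the form ``rationally convex plus totally real implies $R=C$.''

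What the paper actually does --- and what is missing from your proposal --- is to avoid $X$ itself entirely by pulling it back under the proper polynomial map $\pi(z,w)=(z^2,w)$. Then $\pi^{-1}(X)=X_1\cup X_2$ with $X_1=\{(z,\overline z/(1+\overline z)):z\in D\}$ and $X_2=\{(-z,\overline z/(1+\overline z)):z\in D\}$; since the first coordinate is now $z$ rather than $z^2$, each $X_i$ is an honest totally real graph over all of $D$, and H\"ormander--Wermer gives $P(X_i)=C(X_i)$. The rational Stout theorem (Corollary 3.2) is then applied to this union, using a rational function that is nonnegative on $X_1$, nonpositive on $X_2$, and vanishes on $X_1\cup X_2$ only at $X_1\cap X_2=\{(0,0)\}$. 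Here your one genuinely correct key ingredient, the identity $\overline z=w_2/(1-w_2)$ on the graphs, is exactly what produces it: $p(u,v)=uv/(1-v)$ equals $u\overline z=\pm|z|^2$ on $X_1$ and $X_2$ respectively (as a side remark, this even repairs a miscomputation in the paper, whose stated choice $p(u,v)=uv/(1+u)$ takes non-real values on $X_2$). Finally there is a step your proposal omits altogether: one must descend from $R(\pi^{-1}(X))=C(\pi^{-1}(X))$ back to $R(X)=C(X)$, which the paper does with a symmetrization lemma (Lemma 3.3), averaging numerators and denominators of rational approximants over the deck transformations of $\pi$. Without the lift there are no pieces satisfying the hypotheses of the union theorem for your gluing step to glue, and without the descent lemma the statement about $R[z^2,\overline z/(1+\overline z);D]$ does not follow.
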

To prove this fact, we continue to rely heavily on the theory of polynomial convexity and rational convexity. It may be useful to recall the general scheme in proving $[z^m, g; D]=C(D)$ for appropriately chosen $g$. Roughly speaking we consider the compact set $\tilde X$ which is inverse of $X := \{(z^m, g): z \in D\}$
under the proper polynomial mapping $(z, w) \mapsto (z^m, w)$. Then  $\tilde X$ is a union of graphs (in $\Bbb C^2$) over $D$. The key of  the proof  that to prove  $\tilde X$ is rationally convex. For this, we shall  use an appropriate tool which is the  version of Kallin's lemma (in fact Stout's theorem for rational algebras). Note  that union of two polynomially convex sets  may even fail to be  rationally convex (see \cite{Stol} ).  Moreover, Stolzenberg shown that  there are two totally real set in $\Bbb C^2$ that their union is not rational convex.
\par
We would like to note that Preskenis (see \cite{Pre}) give  some sufficient conditions such that $R[z, f(z); D]=C(D)$ (but not $z^2$), where $f$ is polynomial of degree 2, in two variables in $z, \overline{z}  $.

We do not to know what under conditions $$R[z^2, f(z)^2; D]=C(D)$$ that follows  $$P[z^2, f(z)^2; D]=C(D),$$ where $f$ looks like $\overline z$ near $0$.
\vskip 0.5cm
\noindent
{\bf Acknowledgment.} This work was done  during a stay of the first author at Vietnam Institute for Advance Study in Mathematics. He wishes to express his gratitude to the institute for the support.

\section{Preliminaries}

\hskip 0.6cm  For a compact set $K\subset \bc^n$, let $C(K)$ denote the algebra
 of all continuous complex valued function on $K$, with norm
$$\Vert g\Vert_K=\max\{\vt g(z)\vt : z\in K\}, \quad \text{ for
 every}\quad g\in C(K),$$
and let $P(K)$ denote the closure of set polynomials in $C(K)$; let
 $A(K)$ be the subalgebra of  $C(K)$ of the  functions which are
 holomorphic on the interior $\text{int}(K)$ of $K$; let $ R(K)$ be the closure in
 $C(K)$ of the rational functions with poles off $K$.
Let  $K$ be a compact subset of $\Bbb C^n$, by $\hat K$ we
 denote the polynomially convex hull of $K$ i.e.,
$$\hat K=\{z \in \bold C^n: \vert p(z)\vert \le \max_K \vert p\vert  \
  \text{for every polynomial}\  p \  \text{in}\  \Bbb C^n \}.$$
We say that $K$ is polynomially convex if $\hat K=K$. By definition,
 $R\text{-hull}(K)$ consists of all $z\in\Bbb C^n$ such that
$$\vert g(z)\vert \le\max_{K}\vert g\vert$$
for every  rational function $g$ which is analytic about $K$. If
 $K=R\text{-hull} (K)$, we say that $K$ is rationally convex in $\Bbb C^n$.
 Notice that $K\subset R\text{-hull}(K)\subset \hat {K}.$ Moreover, these inclusions may be proper. It is well-known that $K,\hat K$ and  $R\text{-hull}(K)$ respectively can be identified with the space of maximal ideal of $C(K), P(K)$ and $R(K)$.
\par The interest for studying polynomial convexity and rational convexity stems from the celebrated
Oka-Weil approximation theorem (see \cite{AW}, page 36) which states that holomorphic functions near
a compact polynomially (resp. rationally) convex subset  of $\Bbb C^n$ can be uniformly approximated by polynomials (resp. rational functions) in
$\Bbb C^n$. Later on, H\"ormander and Wermer proved that continuous function on  a compact polynomially convex subset of smooth {\it totally real} manifold $M$ in $\Bbb C^n$ can be approximated uniformly by polynomials (see Theorem 1.1, \cite{HW}). Recall that a manifold $M$ is totally real at $p\in M$  if the real tangent space $T_p(M)$ of $M$ at $p$ contains no complex line. A manifolds $M$ is totally real if it is totally real at any point of $M$.  An example of totally real manifold is the real Euclidean space $\Bbb R^n$. In this case, the mentioned  above theorem of H\"ormander and Wermer  reduces to classical  Stone-Weierstrass theorem. Note that, if $f_1,..., f_n$ are  $\mathcal C^1$ functions on an open subset $U$ of $\Bbb C^n$ and $\det\Big(\cfrac{\partial f_i}{\partial \overline z_j}(a)\Big)_{ij}\ne 0$ with $a=(a_1,..., a_n)\in U$  then $ M=\{(z_1,..., z_n, f_1,..., f_n): z\in U \}$ is totally real at $(a_1,.., a_n, f_1(a),..., f_n(a))$. The reader may consult  excellent sources like \cite{AW} and \cite{St} for more applications of polynomial convexity and rational convexity to function theory of several complex variables.
\par Observe that union of two polynomially convex sets may even fail to be  rationally convex (see\cite{Stol} ). On the positive side, the following result due to Kallin gives a sufficient condition for polynomial convexity of union of two polynomially convex compact sets. It is a powerful tool in verifying polynomial convexity of finite union of polynomially convex sets.
 \par
 \begin{Theorem} {\rm ( Kallin's lemma \cite{Ka,P,St})}  Suppose that:
\par \noindent 1) $X_1$ and $X_2$ are polynomially convex subsets of
 $\bc^n$;
\par \noindent 2) $Y_1$ and $Y_2$ are polynomially convex subsets of
 $\bc$ such that $0$ is a boundary point of both $Y_1$  and $Y_2$ and
 $Y_1\cap Y_2= \{0\}$;
\par \noindent 3) $p$ is a polynomial such that $p(X_1)=Y_1$ and
 $p(X_2)=Y_2$;
\par \noindent 4) $p^{-1}(0)\cap (X_1\cup X_2)$ is polynomially convex.
\par \noindent Then $X_1\cup X_2$ is polynomially convex.
\end{Theorem}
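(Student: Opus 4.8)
The plan is to reduce Theorem \ref{main4} to a rational convexity statement in $\mathbb{C}^2$ by pulling $X$ back through the proper polynomial map $\pi(z,w)=(z^2,w)$, and then to exhibit an explicit rational function that separates the two sheets of the preimage so that the rational version of Kallin's lemma (Stout's theorem for rational algebras) applies. Write $g(z)=\overline z/(1+\overline z)$ and $X=\{(z^2,g(z)):z\in D\}\subset\mathbb{C}^2$. Since $D$ is symmetric under $z\mapsto -z$, the preimage $\tilde X=\pi^{-1}(X)$ is the union of the two graphs
$$X_1=\{(z,\tfrac{\overline z}{1+\overline z}):z\in D\},\qquad X_2=\{(z,-\tfrac{\overline z}{1-\overline z}):z\in D\},$$
the second coming from the branch $z\mapsto -z$. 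A short computation gives $h_1(z)=h_2(z)\iff 2\overline z=0$, so $X_1\cap X_2=\{(0,0)\}$, and near the origin each $X_i$ is the graph of a $\mathcal C^1$ function behaving like $\pm\overline z$; hence each $X_i$ is a totally real disk, and for $D$ small it is polynomially convex. First I would record these facts.

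The key step is the separating function. Set $q(z,w)=\dfrac{zw}{1-w}$, a rational function whose only pole lies on $\{w=1\}$, hence off $\tilde X$ once $D$ is small enough that $|w|$ is small on $\tilde X$. On $X_1$ one has $1-w=1/(1+\overline z)$, so $w/(1-w)=\overline z$ and $q=z\overline z=|z|^2\ge 0$; on $X_2$ one has $1-w=1/(1-\overline z)$, so $w/(1-w)=-\overline z$ and $q=-|z|^2\le 0$. Thus $q$ maps $X_1$ into $[0,\infty)$ and $X_2$ into $(-\infty,0]$, the two images meet only at $0$, and $q^{-1}(0)\cap\tilde X=\{(0,0)\}$ is a single point. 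With $X_1$, $X_2$ and $q$ in this configuration the rational form of Kallin's lemma applies and yields that $\tilde X$ is rationally convex. This is the heart of the argument, and it explains why a genuinely rational — not polynomial — separator is required: clearing the denominator $1-w$ destroys the real-valuedness that places the two images on opposite rays.

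Granting rational convexity of $\tilde X$, I would finish in two moves. Since $\tilde X$ is a finite union of totally real $\mathcal C^1$ graphs and is rationally convex, the Oka--Weil theorem together with the local approximation theorem of H\"ormander and Wermer gives $R(\tilde X)=C(\tilde X)$. Then I would descend through $\pi$: every $\varphi\in C(X)$ pulls back to the $\sigma$-invariant function $\varphi\circ\pi\in C(\tilde X)$ (where $\sigma(z,w)=(-z,w)$), which is approximated by rational functions $r_k(z,w)$; symmetrizing, $\tfrac12\big(r_k(z,w)+r_k(-z,w)\big)$ still approximates it and, being even in $z$, is a rational function of $(z^2,w)$ with poles off $X$. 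Since $z\mapsto(z^2,g(z))$ identifies $C(D)$ with $C(X)$ and $R[z^2,g;D]$ with $R(X)$, this yields $R[z^2,g;D]=C(D)$.

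The main obstacle is precisely the rational convexity of $\tilde X$: verifying the hypotheses of the rational Kallin/Stout lemma for the explicit pair $(X_1,X_2)$ and separator $q$, which is where the whole difficulty of the polynomial case (where $P[z^2,g;D]\ne C(D)$) is dissolved by allowing the pole at $w=1$. A secondary technical point is justifying $R(\tilde X)=C(\tilde X)$ at the crossing point $(0,0)$, where $\tilde X$ is not a manifold but remains a transversal union of two totally real planes.
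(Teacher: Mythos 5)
Your proposal does not prove the statement it was assigned. The statement is Kallin's lemma itself: under hypotheses 1)--4) the union $X_1\cup X_2$ satisfies $\widehat{X_1\cup X_2}=X_1\cup X_2$. What you have written is instead a proof sketch of the paper's Theorem \ref{main4} (that $R[z^2,\overline{z}/(1+\overline{z});D]=C(D)$), and at its decisive step you explicitly invoke ``the rational form of Kallin's lemma'' as a black box. As an argument for the quoted lemma this is circular and, more basically, off target: nowhere do you take a point $x\in\widehat{X_1\cup X_2}$ and show it lies in $X_1\cup X_2$. In the paper this lemma carries no proof at all --- it is quoted from the literature \cite{Ka,P,St} --- and an actual proof needs a different kind of argument: one first shows $p(x)\in\widehat{Y_1\cup Y_2}=Y_1\cup Y_2$ (using that $Y_1,Y_2\subset\mathbb{C}$ are polynomially convex and meet only at the common boundary point $0$); if $p(x)\in Y_i\setminus\{0\}$, a peaking function on $Y_i$ at $p(x)$, composed with $p$ and combined with an Oka--Weil type localization, forces $x\in\widehat{X_i}=X_i$; and if $p(x)=0$, one localizes $x$ to the hull of $p^{-1}(0)\cap(X_1\cup X_2)$, which is all of that set by hypothesis 4). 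None of these steps appears in your text, so as a proof of the stated lemma there is a complete gap.

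It is worth saying that, judged as a proof of Theorem \ref{main4}, your sketch essentially reproduces the paper's own route: pullback under $\pi(z,w)=(z^2,w)$, total reality and polynomial convexity of the two sheets, a rational separating function, the rational Stout/Kallin theorem (the paper's Theorem \ref{main1} and Corollary \ref{main2}), and descent through $\pi$ by symmetrization (the paper's Lemma \ref{bdbs}). Indeed your separator $q(z,w)=zw/(1-w)$ is a sound choice: on $X_1=\{(z,\overline{z}/(1+\overline{z}))\}$ one gets $q=|z|^2$ and on $X_2=\{(-z,\overline{z}/(1+\overline{z}))\}$ one gets $q=-|z|^2$, exactly real values on opposite rays, which is cleaner than the paper's $p(u,v)=uv/(1+u)$, whose stated images $Y_1,Y_2$ do not in fact come out real on $X_2$ as computed there. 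But none of this repairs the mismatch: the assigned statement is the polynomial Kallin lemma, and your text neither proves it nor could, since it presupposes its rational analogue.
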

In \cite{Chi}, we provided an analogous result for rational convexity. Stout improved the conclusion of Kallin's lemma by strengthening the conditions.
  \begin{Theorem}\label{stout} {\rm (Stout's theorem \cite{P,St})}  Suppose that:
\par 1) $X_1, X_2$ are compact subset of $\Bbb C^n$ with $P(X_1)=C(X_1)$ and $P(X_2)=C(X_2)$.
\par 2) $Y_2, Y_2$ are polynomially convex sets of $\Bbb C$ such that $0$ is a boundary point of both  $Y_1$ and $Y_2$, and $Y_1\cap Y_2=\{0\}$.
\par 3) $p$ is a polynomial on $X_1\cup X_2$ such that $p(X_1)\subset Y_1$ and $p(X_2)\subset Y_2$.
\par 4) $p^{-1}(0)\cap( X_1\cup X_2)= X_1\cap X_2.$
\par\noindent
Then $P(X_1\cup X_2)=C(X_1\cup X_2)$.
\end{Theorem}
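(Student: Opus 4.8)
The plan is to deduce the equality $P(X_1\cup X_2)=C(X_1\cup X_2)$ from \emph{Bishop's antisymmetric decomposition theorem}. Write $\Sigma:=X_1\cup X_2$ and let $A=P(\Sigma)$ be the uniform closure of the polynomials on the compact set $\Sigma$; since polynomials separate points of $\mathbb{C}^n$, this is a uniform algebra on $\Sigma$. By Bishop's theorem, $A=C(\Sigma)$ provided $\overline{A|_K}=C(K)$ for every maximal set of antisymmetry $K$ of $A$, and recall that for each such $K$ the restriction algebra $\overline{A|_K}$ is itself antisymmetric. I will therefore show that every maximal set of antisymmetry is contained in $X_1$ or in $X_2$; granting this, if say $K\subseteq X_1$, then since $K$ is closed and $P(X_1)=C(X_1)$, Tietze extension gives $C(K)=C(X_1)|_K\subseteq P(K)\subseteq\overline{A|_K}$, whence $\overline{A|_K}=C(K)$. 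Throughout put $\Sigma_0:=X_1\cap X_2$, which by hypothesis (4) equals $p^{-1}(0)\cap\Sigma$.

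Two inclusions will be used repeatedly. First, $\{z\in\Sigma:p(z)\in Y_1\}\subseteq X_1$ (and symmetrically with $Y_2$): indeed, if $z\in X_2$ then $p(z)\in Y_2$ by (3), so $p(z)\in Y_1\cap Y_2=\{0\}$ by (2), and then $z\in p^{-1}(0)\cap\Sigma=\Sigma_0\subseteq X_1$ by (4). Second, since $Y_1\cup Y_2$ is polynomially convex in $\mathbb{C}$ (the hypotheses force its complement to be connected) and $0$ is a boundary point of it, one-variable function theory (Mergelyan, so $P(Y_1\cup Y_2)=A(Y_1\cup Y_2)$) provides a function $h$ peaking at $0$, i.e.\ $h\in P(Y_1\cup Y_2)$ with $h(0)=1$ and $|h|<1$ on $(Y_1\cup Y_2)\setminus\{0\}$. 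As $p(\Sigma)\subseteq Y_1\cup Y_2$, the function $h\circ p$ lies in $A$, equals $1$ exactly on $\Sigma_0=p^{-1}(0)\cap\Sigma$ and has modulus $<1$ elsewhere, so $\Sigma_0$ is a peak set for $A$. Consequently every maximal set of antisymmetry $K$ satisfies either $K\subseteq\Sigma_0$ or $K\cap\Sigma_0=\emptyset$.

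If $K\subseteq\Sigma_0\subseteq X_1$ we are done by the reduction above. So suppose $K\cap\Sigma_0=\emptyset$, whence $0\notin p(K)$; as $p(K)$ is compact there is $\eta>0$ with $p(K)\subseteq(Y_1\setminus B(0,\eta))\cup(Y_2\setminus B(0,\eta))$. These two sets are disjoint, compact, and each polynomially convex, hence so is their union, and the function equal to $1$ on the first piece and $0$ on the second is holomorphic near it; by Oka--Weil it is a uniform limit of one-variable polynomials $s_k$. Then $s_k\circ p\in A$ and $s_k\circ p\to\rho$ uniformly on $K$, where $\rho$ is the $\{0,1\}$-valued function recording on which side $p$ lands. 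Thus $\rho\in\overline{A|_K}$ is real-valued, and antisymmetry of $\overline{A|_K}$ forces $\rho$ to be constant on $K$. Therefore $p(K)\subseteq Y_1$ or $p(K)\subseteq Y_2$, and by the first inclusion above $K\subseteq X_1$ or $K\subseteq X_2$. This gives $\overline{A|_K}=C(K)$ for every maximal set of antisymmetry, and Bishop's theorem yields $P(\Sigma)=C(\Sigma)$.

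The heart of the argument—and the step I expect to require the most care—is the construction of the peak function at the junction, i.e.\ that $0$ is a peak point for $P(Y_1\cup Y_2)$; this is precisely where hypothesis (2) is indispensable. Because $0$ lies on the boundary of each $Y_i$ and $Y_1\cap Y_2=\{0\}$, no punctured neighbourhood of $0$ can be contained in $Y_1\cup Y_2$ (otherwise connectedness of the punctured disk, together with $Y_1\cap Y_2=\{0\}$, would push $0$ into the interior of one $Y_i$), so $0$ is genuinely accessible from the connected complement of the polynomially convex set $Y_1\cup Y_2$; this accessibility is what delivers the peaking function $h$. Once $\Sigma_0$ is known to be a peak set, the separation of the two ``sides'' away from $0$ is routine Oka--Weil, and the antisymmetric decomposition performs the bookkeeping of patching $X_1$ and $X_2$ across $\Sigma_0$.
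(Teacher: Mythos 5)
Your argument breaks at the sentence ``Consequently every maximal set of antisymmetry $K$ satisfies either $K\subseteq\Sigma_0$ or $K\cap\Sigma_0=\emptyset$.'' This dichotomy is not a consequence of $\Sigma_0$ being a peak set, and it is false as a general principle: for the disk algebra $A(\overline{D})$ the unique maximal set of antisymmetry is all of $\overline{D}$ (any real-valued element is constant by the open mapping theorem), while $\{1\}$ is a peak set, via $(1+z)/2$, which meets $\overline{D}$ without containing it. What Bishop--Glicksberg theory gives is the opposite implication: maximal antisymmetry sets are intersections of peak sets; peak sets need not be unions of maximal antisymmetry sets. So the case $K\cap\Sigma_0\neq\emptyset$, $K\not\subseteq\Sigma_0$ is genuinely unhandled, and your $\rho$-construction cannot be run for such $K$: since $0\in p(K)$, no $\eta>0$ puts $p(K)$ inside $(Y_1\setminus B(0,\eta))\cup(Y_2\setminus B(0,\eta))$, so Oka--Weil produces no real-valued element of $\overline{A|_K}$ separating the two sides. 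Nor can you dismiss the straddling case as impossible: in the end all maximal antisymmetry sets are singletons, but that is equivalent to the conclusion $P(\Sigma)=C(\Sigma)$ you are trying to prove, so invoking it would be circular.

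The rest of your scaffolding is sound and, in fact, matches the paper's toolkit: the inclusion $\{z\in\Sigma:p(z)\in Y_1\}\subseteq X_1$, the polynomial convexity of $Y_1\cup Y_2$ and the fact that $0$ is a boundary point of it, hence a peak point for $P(Y_1\cup Y_2)$ (this is exactly Lemma \ref{peak} quoted from Stout), the Tietze reduction on sets inside $X_1$, and the observation that each $Y_i\setminus B(0,\eta)$ remains polynomially convex because $0\in\partial Y_i$. Note, however, that the paper does not prove Theorem \ref{stout} at all --- it cites \cite{P,St} --- and its own proof of the rational analogue (Theorem \ref{main1}) runs the peak function through an \emph{annihilating measure} rather than through the antisymmetric decomposition: for $\mu\perp P(\Sigma)$ one applies dominated convergence to $\int (h\circ p)^n f\,d\mu=0$, which localizes $\mu$ to the peak set $\Sigma_0=X_1\cap X_2\subseteq X_1$, and then $P(X_1)=C(X_1)$ forces $\mu=0$. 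The measure-theoretic localization works precisely where your dichotomy fails, because pointwise bounded convergence of $(h\circ p)^n$ suffices under the integral, whereas your approach needs uniform convergence on each antisymmetry set. To repair your proof, either switch to this annihilating-measure scheme outright, or stay in the Bishop framework but take $\mu$ an extreme point of the unit ball of $P(\Sigma)^{\perp}$ (such $\mu$ is supported on a maximal antisymmetry set): since $\Sigma_0$ is a peak set, $\mu|_{\Sigma_0}\in P(\Sigma)^{\perp}$ as well, and extremality forces $\mu$ to be concentrated either on $\Sigma_0$ or on $\Sigma\setminus\Sigma_0$, after which your two cases go through at the level of measures.
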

 Stout's theorem is useful tool in studying function algebras (see \cite{D1},\cite{P} and the references given therein.) We shall provide an analogous result for rational function algebras (Theorem \ref{main1}).
 \par Let $\mathcal A$ be a uniform algebra on a compact space $X$.  A point $x\in X$ is a  peak point for $\mathcal A$ if there is a function $f\in \mathcal A$ such that $f(x)=1$ while $\vert f(y)\vert <1$ for
 $y\in X$ and $y\ne x$.  The function $f$ which satisfies this condition is
 called to peak at $x$. The subset $E$ of $X$ is a peak set
for $\mathcal A$ if there is $f \in \mathcal A$ with $f = 1$ on $E$ and $\vert f \vert < 1$ on $X \setminus E$.
\par Clearly, if $x$ is a peak point then $E=\{x\}$ is a peak set. The well known lemma below is
  a simple observation that certain points are peak point for $P(X).$
\begin{Lemma}\label{peak} {\rm (\cite{St})} If $K$ is a compact, polynomially convex subset of the complex plane, then
every boundary point of $K$ is a peak point for the algebra $P(K)$.
\end{Lemma}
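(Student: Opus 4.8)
The plan is to reduce the whole statement to the construction of a single explicit peaking function coming from a Möbius transformation, using the planar characterization of polynomial convexity together with Runge's theorem. First I would recall that a compact set $K\subset\mathbb C$ is polynomially convex precisely when its complement $\mathbb C\setminus K$ is connected (and necessarily unbounded). Runge's theorem then gives $P(K)=R(K)$: since every pole off $K$ lies in the single unbounded component of $\mathbb C\setminus K$, each rational function with poles off $K$ is a uniform limit on $K$ of polynomials. In particular, for any $c\notin K$ the function $1/(z-c)$ belongs to $P(K)$.

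Now fix $p\in\partial K$. The core of the argument is to produce an open disc $D(c,\rho)\subset\mathbb C\setminus K$ whose bounding circle passes through $p$, that is, a point $c\notin K$ with $D(c,\rho)\cap K=\emptyset$ and $|p-c|=\rho$. Granting such a disc, I would set
$$ f(z)=\frac12\Big(1+\frac{p-c}{z-c}\Big), $$
a rational function whose only pole is at $c\notin K$, so $f\in P(K)$. Writing $u(z)=(p-c)/(z-c)$, one has $|u(z)|=\rho/|z-c|\le 1$ for every $z\in K$ because $D(c,\rho)$ misses $K$ and hence $|z-c|\ge\rho$; moreover $u(p)=1$. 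Since $w\mapsto\frac12(1+w)$ maps the closed unit disc into $\overline D(\tfrac12,\tfrac12)$ and attains modulus $1$ only at $w=1$, we get $f(p)=1$ and $|f(z)|<1$ whenever $u(z)\ne 1$. As the Möbius map $z\mapsto u(z)$ is injective, $u(z)=1$ forces $z=p$, so $f$ peaks at $p$ and $p$ is a peak point for $P(K)$.

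The genuinely delicate step, and where I expect the main obstacle, is the existence of the supporting disc at an arbitrary boundary point. For the round discs and smoothly bounded sets relevant to this paper it is immediate: at a boundary point of a disc one simply takes the externally tangent disc. In full generality a supporting disc can fail to exist (for instance at an inward cusp of $\partial K$), and then the explicit function above is not available.

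To cover all boundary points I would argue instead through analytic capacity, which is exactly where connectedness of the complement is used. Since $\mathbb C\setminus K$ is connected and unbounded, complement points accumulating at $p$ can be joined to $\infty$ inside $\mathbb C\setminus K$, so for infinitely many dyadic scales the complement crosses the annulus $\{2^{-n}\le|z-p|\le 2^{-n+1}\}$ in a continuum of diameter comparable to $2^{-n}$. Such a continuum has analytic capacity $\gtrsim 2^{-n}$, whence
$$ \sum_{n} 2^{n}\,\gamma\big(\{2^{-n}\le|z-p|\le 2^{-n+1}\}\setminus K\big)=\infty . $$
By Mel'nikov's criterion $p$ is then a peak point for $R(K)$, and since $R(K)=P(K)$ the conclusion follows. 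In the write-up I would present the elementary disc construction as the main line and invoke the capacity argument only to dispatch the irregular boundary points.
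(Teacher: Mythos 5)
Your proposal is correct, but note at the outset that the paper contains no proof of this lemma at all: it is quoted verbatim from Stout's book \cite{St}, so the comparison is with the standard textbook treatment rather than with an in-paper argument. Judged on its own terms, your two-pronged argument works. The reduction $P(K)=R(K)$ via Runge's theorem (connected complement, hence a single unbounded complementary component into which all poles can be pushed) is exactly right, and the tangent-disc computation is sound: on $K$ one has $|u(z)|\le 1$ for $u(z)=(p-c)/(z-c)$, and $\bigl|\tfrac12(1+u)\bigr|=1$ forces $u=1$ by the equality case of the triangle inequality, whence $z=p$ by injectivity of the M\"obius map. You are also right to flag that a supporting disc can fail at an inward cusp, so the disc construction alone does not prove the lemma. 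The capacity argument that covers the general case is correct and in fact slightly stronger than you state: for \emph{every} $n$ (not merely infinitely many) there is $q\in\mathbb{C}\setminus K$ with $|q-p|<2^{-n}$, and since $\mathbb{C}\setminus K$ is open and connected, hence path-connected, a path from $q$ to $\infty$ contains a subarc (between its last exit from the inner circle and first hit of the outer circle) that is a continuum in $\{2^{-n}\le |z-p|\le 2^{-n+1}\}\setminus K$ of diameter at least $2^{-n}$; the bound $\gamma(E)\ge \operatorname{diam}(E)/4$ for continua and monotonicity of $\gamma$ then make every term of Mel'nikov's series at least $1/4$, so divergence is immediate, and Mel'nikov's criterion (applicable since $p\in\partial K\subset K$) gives that $p$ is a peak point for $R(K)=P(K)$. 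Structurally, then, your ``main line'' is logically dispensable: the capacity argument alone proves the lemma, and the disc construction is a special case. What the elementary branch buys is relevance to this paper: in the application the sets $Y_1,Y_2$ lie on the positive and negative real axes with the peak point at $0$, where an externally tangent disc exists trivially, so the explicit M\"obius peaking function suffices there without any appeal to analytic capacity; conversely, the Mel'nikov route buys full generality at the cost of invoking genuinely heavier machinery than the classical proofs (e.g.\ Bishop-type $\tfrac14$--$\tfrac34$ peaking constructions) that sources such as \cite{St} or Gamelin's book use.
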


\section{Proof of the main result}
\par We begin this section at the following theorem which is a modification of Stout's theorem.
\begin{Theorem}\label{main1} Suppose that:
\par (i) $X_1, X_2$ are compact subset of $\Bbb C^n$ satisfying every continuous functions on $X_i$ ($i=1,2$) are uniformly approximated on $X_i$ by the rational functions with poles off $X_1\cup X_2$.
\par (ii) $Y_2, Y_2$ are polynomially convex sets of $\Bbb C$ such that $E=Y_1\cap Y_2$ are peak set for both $P(Y_1)$ and $P(Y_2)$.
\par (iii) $p$ is a rational function holomorphic on $X_1\cup X_2$ such that $p(X_1)\subset Y_1$ and $p(X_2)\subset Y_2$.
\par (iv) $p^{-1}(E)\cap( X_1\cup X_2)= X_1\cap X_2.$
\par\noindent
Then $R(X_1\cup X_2)=C(X_1\cup X_2)$. In particular, $X_1\cup X_2$ is rationally convex.
\end{Theorem}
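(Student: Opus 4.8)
The plan is to imitate the proof of Stout's theorem (Theorem \ref{stout}) under two systematic replacements: each polynomial there becomes a rational function holomorphic on $X:=X_1\cup X_2$, and the single boundary peak point $0$ is replaced by the peak set $E=Y_1\cap Y_2$. Writing $K:=X_1\cap X_2$, I would first extract the geometry hidden in the hypotheses. If $x\in K$ then $p(x)\in Y_1\cap Y_2=E$, so $K\subset p^{-1}(E)$; condition (iv) sharpens this to $p^{-1}(E)\cap X=K$, whence $p(X_i\setminus K)\subset Y_i\setminus E$ for $i=1,2$. Thus $p$ sends the two ``free'' pieces into the parts of $Y_1,Y_2$ on which the peak functions are strictly less than $1$ in modulus, and it is this exactness that the whole argument rests on.

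Second, I would build the peaking data inside $R(X)$. Using (ii), pick $h_i\in P(Y_i)$ with $h_i\equiv 1$ on $E$ and $|h_i|<1$ on $Y_i\setminus E$. Since $p$ is rational and holomorphic on $X$, the composite $q\circ p$ is a rational function with poles off $X$ for every polynomial $q$, so letting $q$ approximate $h_i$ on $Y_i\supset p(X_i)$ shows that $h_i\circ p$ is a uniform limit on $X_i$ of members of $R(X)$. Gluing $h_1$ and $h_2$ along $E$ (they agree there, both $\equiv 1$) produces a continuous function on $Y_1\cup Y_2$ which, by one-variable rational approximation and because $E=Y_1\cap Y_2$, lies in $R(Y_1\cup Y_2)$; pulling it back by $p$ exhibits $K$ as a peak set for $R(X)$. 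In the same one-variable spirit I would produce a transition element $w\in R(X)$ of the form $w=\mu\circ p$ with $\mu\in R(Y_1\cup Y_2)$ so that $w\approx 0$ on $X_1\setminus K$ and $w\approx 1$ on $X_2\setminus K$ (using powers of $h_1,h_2$ to localize), the value of $w$ on $K$ being irrelevant.

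The approximation is then a patching argument. Given $f\in C(X)$ and $\varepsilon>0$, hypothesis (i) supplies rational functions $r_1,r_2$ with poles off $X$ and $\|r_i-f\|_{X_i}<\varepsilon$, so that $\|r_1-r_2\|_K<2\varepsilon$. I would set $r:=r_1+(r_2-r_1)\,w\in R(X)$ and verify $\|r-f\|_X=O(\varepsilon)$ by splitting $X$ into three regions: on $X_1\setminus K$ the factor $w$ is small, on $X_2\setminus K$ the factor $1-w$ is small, and on a neighbourhood $V$ of $K$ the factor $r_2-r_1$ is small (by continuity, since it is small on $K$), each region absorbing the one uncontrolled term. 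Letting $\varepsilon\to0$ gives $C(X)\subset R(X)$, i.e. $R(X)=C(X)$. The final assertion is then formal: the maximal ideal space of $C(X)=R(X)$ is just $X$ with point evaluations, and since it is identified with $R\text{-hull}(X)$, we obtain $R\text{-hull}(X)=X$, the rational convexity of $X_1\cup X_2$.

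The step I expect to be the real obstacle is producing the transition function $w$ as a genuine element of $R(X)$. The pulled-back peak functions $h_i\circ p$ are, by construction, controlled only on their own piece $X_i$: as an element of $P(Y_1)$ the function $h_1$ carries no information on $Y_2\setminus E$, so $h_1\circ p$ need not be bounded in any useful way on $X_2\setminus K$. Manufacturing a single function that is small on $X_1\setminus K$ and close to $1$ on $X_2\setminus K$ therefore amounts to separating the disjoint planar compacta $p(X_1\setminus V)$ and $p(X_2\setminus V)$ by a function holomorphic on all of $Y_1\cup Y_2$ — precisely the rational-convexity phenomenon for unions warned about in the introduction, which must here be forced by the peak-set hypothesis on $E=Y_1\cap Y_2$ together with the exactness $p^{-1}(E)\cap X=K$ of (iv). Achieving this separation while keeping the poles off $Y_1\cup Y_2$ (so that, after composing with $p$, the poles stay off $X$) is the technical heart of the proof.
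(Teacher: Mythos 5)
Your proposal takes a genuinely different route from the paper's --- a constructive patching argument rather than a duality argument --- and it contains a genuine gap, namely the very step you flag at the end: the transition function $w\in R(X_1\cup X_2)$ with $w\approx 0$ on $X_1\setminus K$ and $w\approx 1$ on $X_2\setminus K$ is never constructed, and the hypotheses do not supply it by the means you sketch. Hypothesis (ii) gives $h_i\in P(Y_i)$ only as an element of the uniform algebra on $Y_i$; as you yourself observe, $h_1$ has no values, let alone bounds, on $Y_2\setminus E$, so no combination of powers and products of $h_1\circ p$ and $h_2\circ p$ produces an element of $R(X_1\cup X_2)$ with the required two-sided control. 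Your intermediate assertion that the function equal to $h_1$ on $Y_1$ and to $h_2$ on $Y_2$ lies in $R(Y_1\cup Y_2)$ ``by one-variable rational approximation'' is unproved and is not a routine fact: under (ii) alone the union $Y_1\cup Y_2$ need not even be polynomially convex (unlike the Kallin--Stout situation $Y_1\cap Y_2=\{0\}$ with $0$ a boundary point, where the complement stays connected and Lemma \ref{peak} then yields a single global peak function), and statements that approximability on two planar compacta passes to their union are exactly the union phenomena the introduction warns about via Stolzenberg's example. So the patching scheme $r=r_1+(r_2-r_1)w$, standard once $w$ exists, reduces the theorem to an unproved claim of essentially the same depth as the theorem itself.

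The paper's proof never needs such a $w$ because it dualizes. It takes a measure $\mu$ on $X=X_1\cup X_2$ orthogonal to $R(X)$, and integrates $(h\circ p)^n f$ against $\mu$ for every rational $f$ with poles off $X$, where $h$ is a peak function attached to $E$ (compositions $q\circ p$ with $q$ a polynomial stay in the algebra since $p$ is rational and holomorphic on $X$). Dominated convergence as $n\to\infty$ gives $\int_{X_i\setminus p^{-1}(E)} f\,d\mu=0$ for all such $f$, so hypothesis (i) forces $\mu|_{X_i\setminus p^{-1}(E)}=0$ for $i=1,2$; by (iv) the surviving mass of $\mu$ sits on $p^{-1}(E)\cap X=X_1\cap X_2$, which lies inside each $X_i$, and one further application of (i) gives $\mu=0$. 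Hahn--Banach and Riesz representation then yield $R(X)=C(X)$, and rational convexity follows since $X$ is the maximal ideal space of $R(X)=C(X)$. The structural point of divergence is that in the dual argument the peak function only ever multiplies the measure, piece by piece; it is never asked to interpolate between the two sides, which is precisely what your $w$ must do. (Your instinct about where the difficulty sits is sound --- even the paper's write-up is delicate at exactly this spot, since $(h\circ p)^n$ has to make sense on both pieces; in the paper's application this is harmless because there $Y_1\cup Y_2$ lies in the real axis and $E=\{0\}$, so a global peak function exists --- but identifying the obstacle is not the same as overcoming it.) To salvage your route you would have to actually prove the separation statement you postponed; otherwise, the measure-theoretic argument is the way through.
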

\begin{proof} Let $\mu$ be a measure on $X_1\cup X_2$ that is orthogonal to the algebra $R(X_1\cup X_2)$. Then, we have that $\int f d\mu=0$, for every rational functions $f$  with poles off $X_1\cup X_2$.  We have to show that $\mu$ is a zero measure.
\par We first show that  restriction of $\mu$ to $X_1\setminus p^{-1}(E)$ which  is  zero measure. Since $E$ is peak set for $P(Y_2)$, we may find a function $h\in P(Y_2)$ such that $h(z)=1$ for all $z\in E$ and $\vert h(z)\vert<1$ for every $z\in Y_2\setminus E$. By the dominated convergence theorem, we have
$$0=\int (h\circ p)^nfd\mu\to \int_{X_1\setminus \big(\bigcup_{\lambda\in E} p^{-1}(\lambda)\big)} f d\mu $$
for every rational function $f$  with poles off $X_1\cup X_2$. Thus $$\int_{X_1\setminus\big(\bigcup_{\lambda\in E} p^{-1}(\lambda)\big)}  f d\mu=0$$ for every rational functions $f$ poles off $X_1\cup X_2$. In view the condition (i), we may conclude that $\mu\big|_{X_1\setminus p^{-1}(E)}$ is  the zero measure. In the same way,  $\mu\big|_{X_2\setminus \big(\cup_{\lambda\in E} p^{-1}(E)\big)}$ is  the zero measure.  Hence, the measure $\mu$ is  concentrated on the set
$$\aligned &(X_1\cup X_2) \setminus\big([X_1\setminus \big(\cup_{\lambda\in E} p^{-1}(\lambda)\big)]\cup[X_2\setminus \big(\cup_{\lambda\in E} p^{-1}(\lambda)\big)] \big)
\\&= (X_1\cup X_2)\cap (\cup_{\lambda\in E} p^{-1}(\lambda)= X_1\cap X_2.
\endaligned$$
The last equation follows from (iv).  Using again the condition (i), we can deduce that $\mu=0$. It is a equivalent to $R(X_1\cup X_2)=C(X_1\cup X_2)$. In particular, $R\text{-hull}(X_1\cup X_2)=X_1\cup X_2$, or $X_1\cup X_2$ is rationally convex.
\end{proof}
We get the following corollary.
\begin{Corollary}\label{main2} Suppose that:
\par 1) $X_1, X_2$ are compact subset of $\Bbb C^n$ with $C(X_1)=P(X_1)$
and $C(X_2)=P(X_2)$.
\par 2) $Y_1, Y_2$ are polynomially convex sets of $\Bbb C$ such that $E=Y_1\cap Y_2$ are peak set for both $P(Y_1)$ and $P(Y_2)$.
\par 3) $p$ is a rational function holomorphic on $X_1\cup X_2$ such that $p(X_1)\subset Y_1$ and $p(X_2)\subset Y_2$.
\par 4) $p^{-1}(E)\cap( X_1\cup X_2)= X_1\cap X_2.$
\par\noindent
Then $R(X_1\cup X_2)=C(X_1\cup X_2)$. In particular, $X_1\cup X_2$ is rationally convex.
\end{Corollary}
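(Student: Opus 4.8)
The plan is to deduce this directly from Theorem~\ref{main1}, since the Corollary is nothing but the specialization of that theorem in which the approximation hypothesis has been strengthened. First I would verify that condition 1) of the Corollary implies condition (i) of Theorem~\ref{main1}, and then observe that the remaining hypotheses match verbatim, so that the conclusion is immediate.

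For the first step I would recall that a polynomial is a rational function whose only singularity lies at infinity; in particular a polynomial has no poles in $\Bbb C^n$, and hence none on $X_1\cup X_2$. Consequently every polynomial is, in particular, a rational function with poles off $X_1\cup X_2$, so that uniform approximation by polynomials on $X_i$ is a fortiori uniform approximation by rational functions with poles off $X_1\cup X_2$. Thus the assumption $C(X_i)=P(X_i)$ for $i=1,2$ yields exactly condition (i) of Theorem~\ref{main1}: every continuous function on $X_i$ is the uniform limit on $X_i$ of rational functions holomorphic on $X_1\cup X_2$.

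Next I would note that conditions 2), 3), 4) of the Corollary are word for word the conditions (ii), (iii), (iv) of Theorem~\ref{main1}. Having checked that all four hypotheses of Theorem~\ref{main1} hold, I would invoke that theorem to obtain $R(X_1\cup X_2)=C(X_1\cup X_2)$. The final assertion of rational convexity then follows from the identification of $R\text{-hull}(X_1\cup X_2)$ with the maximal ideal space of $R(X_1\cup X_2)$ recalled in Section~2: when $R(X_1\cup X_2)=C(X_1\cup X_2)$, this maximal ideal space is $X_1\cup X_2$ itself, whence $R\text{-hull}(X_1\cup X_2)=X_1\cup X_2$, i.e. $X_1\cup X_2$ is rationally convex.

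There is no genuine obstacle in this argument: its entire content is the elementary observation that polynomials lie among the rational functions with poles off $X_1\cup X_2$, so that the stronger polynomial-approximation hypothesis of the Corollary subsumes the rational-approximation hypothesis~(i) of Theorem~\ref{main1}. Once this inclusion is recorded, the proof reduces to a direct appeal to the preceding theorem.
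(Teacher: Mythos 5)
Your proposal is correct and follows exactly the paper's own route: the paper's proof is the one-line observation that $C(X_i)=P(X_i)$ implies condition (i) of Theorem~\ref{main1}, which is precisely your argument (you merely spell out the elementary fact that polynomials are rational functions with poles off $X_1\cup X_2$). Nothing further is needed.
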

\begin{proof} The result is followed from the fact that $C(X_1)=P(X_1)$
and $C(X_2)=P(X_2)$ imply the condition (i) of Theorem \ref{main1}.
\end{proof}

 We need the following fact which is a modification of lemma in \cite{P}.
\begin{Lemma}\label{bdbs} Let $X$ be a compact subset of $\bc^2$, and let $\pi : \bc^2\to \bc^2$ be difined by $\pi(z,w)=(z^n, w^m)$. Let $\pi^{-1}(X)=X_{11}\cup X_{12}\cup ...\cup X_{nm}$ with $X_{11}$ compact, and $X_{kl}=\{\exp(\fr{2\pi(k-1)}{m}z, \exp(\fr{2\pi(l-1)}{n}w: (z,w)\in X_{11}\}$ for $1\leq k\leq m, 1\leq l \leq n$. If $R(\pi^{-1}(X))=C(\pi^{-1}(X))$ then $R(X)=C(X)$.
\end{Lemma}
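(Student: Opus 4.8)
The plan is to descend the property $R=C$ through the branched cover $\pi$ by a symmetrization (Reynolds operator) argument on annihilating measures. Write $G$ for the finite abelian group of the linear automorphisms $g_{\zeta,\eta}\colon(z,w)\mapsto(\zeta z,\eta w)$ of $\mathbb{C}^2$ with $\zeta^n=\eta^m=1$; thus $|G|=nm$, the set $\pi^{-1}(X)$ is $G$-invariant with the pieces $X_{kl}$ forming a single $G$-orbit of $X_{11}$, and $\pi\circ g=\pi$ for every $g\in G$ since $(\zeta z)^n=z^n$ and $(\eta w)^m=w^m$. The key invariant-theoretic remark is that the $G$-invariant polynomials are exactly $\mathbb{C}[z^n,w^m]$ (a monomial $z^aw^b$ is fixed iff $n\mid a$ and $m\mid b$), so the field of $G$-invariant rational functions is $\mathbb{C}(z^n,w^m)$.

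The technical heart is a descent step for rational functions. Given a rational function $r$ holomorphic on $\pi^{-1}(X)$, I would form its $G$-average $\mathrm{Av}\,r=\frac{1}{nm}\sum_{g\in G}r\circ g$. Each $g$ preserves $\pi^{-1}(X)$, so every $r\circ g$, and hence $\mathrm{Av}\,r$, is again holomorphic on $\pi^{-1}(X)$; and $\mathrm{Av}\,r$ is $G$-invariant, so by the remark it is a rational function of $(z^n,w^m)$, that is $\mathrm{Av}\,r=R\circ\pi$ for a rational function $R$ on $\mathbb{C}^2$. The pole set of $R\circ\pi$ is $\pi^{-1}$ of the pole set of $R$, so a pole of $R$ at some $x\in X$ would place poles of $\mathrm{Av}\,r$ all along $\pi^{-1}(x)\subset\pi^{-1}(X)$, which is impossible; hence $R$ is holomorphic on $X$. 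This converts rational functions on the cover into rational functions on the base and is where the specific shape $\pi(z,w)=(z^n,w^m)$ is actually used.

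With this in hand the rest is soft. Let $\mu$ be a measure on $X$ with $\int_X r\,d\mu=0$ for every rational $r$ with poles off $X$; it suffices to show $\mu=0$. Since $\pi$ maps $\pi^{-1}(X)$ onto $X$, the map $C(X)\hookrightarrow C(\pi^{-1}(X))$, $f\mapsto f\circ\pi$, is an isometric embedding, so its adjoint $\pi_\ast$ is surjective and there is a measure $\nu$ on $\pi^{-1}(X)$ with $\pi_\ast\nu=\mu$. Averaging, set $\tilde\mu=\frac{1}{nm}\sum_{g\in G}g_\ast\nu$; this is $G$-invariant and still satisfies $\pi_\ast\tilde\mu=\mu$ because $\pi\circ g=\pi$. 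For any rational $r$ holomorphic on $\pi^{-1}(X)$, $G$-invariance of $\tilde\mu$ gives $\int r\,d\tilde\mu=\int(\mathrm{Av}\,r)\,d\tilde\mu=\int(R\circ\pi)\,d\tilde\mu=\int_X R\,d\mu=0$, the last step because $R$ is holomorphic on $X$. Thus $\tilde\mu$ annihilates $R(\pi^{-1}(X))$, so the hypothesis $R(\pi^{-1}(X))=C(\pi^{-1}(X))$ forces $\tilde\mu=0$, whence $\mu=\pi_\ast\tilde\mu=0$ and $R(X)=C(X)$.

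The main obstacle is precisely the descent step of the second paragraph: one must be sure that averaging a rational function over $G$ produces a function that is again rational on the base and whose poles stay off $X$. Both points rest on identifying the invariants as $\mathbb{C}(z^n,w^m)$ and on the elementary bookkeeping that the pole set of $R\circ\pi$ is the $\pi$-preimage of the pole set of $R$. Everything else — existence of a lift $\nu$, its symmetrization to a $G$-invariant $\tilde\mu$, and the duality between $\pi_\ast$ and $f\mapsto f\circ\pi$ — is routine functional analysis.
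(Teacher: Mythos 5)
Your proof is correct, but it takes a genuinely different route from the paper's. The paper argues directly with approximants: it pulls $f\in C(X)$ back to $f\circ\pi$, approximates it by a quotient $P/Q$ with $Q$ zero-free on $\pi^{-1}(X)$, transports the approximation to $X_{11}$ via the deck transformations to get $f\circ\pi\approx P_{kl}/Q_{kl}$ for all $k,l$, and then averages numerator and denominator \emph{separately}, claiming $f\circ\pi\approx \bigl(\sum_{k,l}P_{kl}\bigr)/\bigl(\sum_{k,l}Q_{kl}\bigr)$ on $X_{11}$ (Equation \ref{miss3}); the averaged numerator and denominator are invariant under the deck group, hence polynomials in the variables $(z^n,w^m)$, and the approximation descends to $X$. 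You instead dualize: you lift an annihilating measure $\mu$ of $R(X)$ to a measure $\nu$ on $\pi^{-1}(X)$ by Hahn--Banach, symmetrize it to a $G$-invariant $\tilde\mu$ with $\pi_*\tilde\mu=\mu$, and use your descent step (a $G$-invariant rational function holomorphic near $\pi^{-1}(X)$ equals $R\circ\pi$ with $R$ rational and holomorphic near $X$) to show $\tilde\mu$ annihilates $R(\pi^{-1}(X))=C(\pi^{-1}(X))$, whence $\tilde\mu=0$ and $\mu=0$. Two remarks on the comparison. First, your averaging is applied to the rational function as a whole (equivalently, over the common denominator $\prod_{g\in G}Q\circ g$), which is sound; the paper's separate averaging of $P$ and $Q$ is actually delicate, because although each $Q_{kl}$ is zero-free on $X_{11}$, their \emph{sum} can vanish there (nonzero complex numbers can cancel), in which case the displayed approximation breaks down near such zeros. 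Your argument is exactly the standard repair of that step, so in this respect your route is the more robust one. Second, your ``bookkeeping'' claim that holomorphy of $R\circ\pi$ near $\pi^{-1}(X)$ forces holomorphy of $R$ near $X$ deserves one extra sentence at the branch locus $\{zw=0\}$ (write $R$ in lowest terms and use that $\pi$ is an open map, so $R\circ\pi$ is unbounded near any preimage of a pole of $R$), but this is minor. In short: the paper's proof is elementary and stays at the level of explicit approximation but has a gap in its symmetrization; yours uses measure-theoretic duality --- the same technique the paper itself employs to prove its Theorem 3.1 --- together with correctly executed Reynolds averaging on both functions and measures.
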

\begin{proof} Suppose that $f\in C(X)$. Then $f\circ \pi\in C(\pi^{-1}(X)).$  Since $$R\big( \pi^{-1}(X)\big)= C\big(\pi^{-1}(X)\big)$$ we can seek two polynomials $P, Q$ such that  $$f\circ \pi \approx \frac{P}{Q}, \text {on\; $\pi^{-1}(X)$}$$
and $Q$ has no zero point in $\pi^{-1}(X)$.
In particular , $f\circ \pi\approx \cfrac{P}{Q}$  on $X_{kl}$, for every $1\leq k\leq m$ and $1\leq l\leq n.$ Hence
$$f(z^m, w^n)\approx \cfrac{P(\rho^{k-1}z, \tau^{l-1}w)}{Q(\rho^{k-1}z, \tau^{l-1}w)}:= \frac{P_{kl}(z,w)}{Q_{kl}(z, w)}.$$
Therefore, we  have got the following approximation
\begin{equation}\label{miss3}
\aligned
f(z^m, w^n)& \approx \cfrac{\cfrac{ P_{11}(z,w)+...+ P_{kl}(z,w)+...+ P_{mn}(z,w)}{mn}}{\cfrac{Q_{11}(z,w)+...+ Q_{kl}(z,w)+...+ Q_{mn}(z,w)}{mn}}
\\& \approx \cfrac{ P_{11}(z,w)+...+ P_{kl}(z,w)+...+ P_{mn}(z,w)}{Q_{11}(z,w)+...+ Q_{kl}(z,w)+...+ Q_{mn}(z,w)}
\endaligned
\end{equation}
on $X_{11}$.
Suppose that $P$ and $Q$ have forms
 $P(z,w)=\sum a_{pq} z^pw^q$ and  $Q(z,w)=\sum b_{pq} z^pw^q$
then the right hand of (\ref{miss3}) can be written  $\cfrac{\sum a_{pq} z^{pm}w^{qn}}{\sum b_{pq} z^{pm}w^{qn}}.$  It follows that
 $$f(z^m, w^n)\approx \cfrac{P(z^m, w^n)}{Q(z^m, w^n)},\; \text{on\; $X_{11}$},$$
and so that $R(X)= C(X).$
\end{proof}

\par \noindent {\bf Proof of Theorem 1.1.}
 Let $\pi : \bc^2\to \bc^2$ be difined by $\pi(z,w)=(z^2, w)$. Let $X=\{ \Big(z^2, \cfrac{\overline z}{1+\overline z} \Big):z\in D\}$.  It is easy to check that $z^2,\cfrac{\overline z}{1+\overline z}$ separates points in $D$. Hence, we have $C(X)=C(D)$. Therefore, we reduce to prove $R(X)=R[z^2, \cfrac{\overline z}{1+\overline z}; D]=C(X)$. To do this, by Lemma \ref{bdbs}, we need to shows that
$$R(\pi^{-1}(X))=C(\pi^{-1}(X)).$$
Set $\tilde X=\pi^{-1}(X)$. We have
$\tilde X=X_1\cup X_2,$
where
$$\aligned &X_1=\{\Big(z, \cfrac{\overline z}{1+\overline z} \Big): z\in  D\}
\\&X_2=\{\Big(-z, \cfrac{\overline z}{1+\overline z} \Big): z\in  D\}
\endaligned
$$
Since $X_i, i=1,2$ are totally real sets, we can infer from H\"ormander-Wermer's theorem (\cite{AW}) that $P(X_i)=C(X_i)$ for all $i=1,2$. Consider the rational function
$$p(u, v)=\cfrac{uv}{1+u}.$$
If we choose $D$ small enough then  $p$ has no pole points in $\tilde X$. By a simple computation, we have
$$Y_1:=p(X_1)=\{\cfrac{\vert z\vert^2}{1+\vert z\vert^2+z+\overline{z}}: z\in D\};$$
$$Y_2:= p(X_2)=\{\cfrac{-\vert z\vert^2}{1+\vert z\vert^2+z+\overline{z}}: z\in D\}.$$
 This implies that $Y_1$ is contained in right  real axis and $Y_2$ is contained in left  real axis. Moreover  $Y_1\cap Y_2=\{0\}$.  In particular, $0$ is boundary point of $Y_1 $ and $Y_2$. It follows from Lemma \ref{peak} that $\{0\}$ is peak point of $P(Y_1), P(Y_2)$.  It is easy to check that
$$p^{-1}(0)\cap ((X_1\cup X_2)=(X_1\cup X_2=\{(0, 0)\}.$$
Applying Corollary 3.2, we can deduce that $$R(X_1\cup X_2)=C(X_1\cup X_2)$$
or $$R(\pi^{-1}(X))=C(\pi^{-1}(X)).$$
The theorem is proved.
\begin{Remark} {\rm Since the proof of Theorem 1.1, we have $\pi^{-1}(X)$ is rationally convex. In fact, $\pi^{-1}(X)$ is not polynomially convex (see \cite{OS}).}
\end{Remark}


\begin{thebibliography} {300}
\bibitem{AW} H. Alexander and J. Wermer, {\it Several Complex Variables and Banach Algebras,} Grad. Texts in Math., {\bf 35}, Springer- Verlag, New York, 1998.
\bibitem{HW} L. Hormander and J. Wermer, {\it Uniform approximation on compact sets in $\Bbb C^n$},  Math. Scand. 23, 1968 5-21 (1969).
\bibitem{Ka} E. Kallin, {\it Fat polynomially convex sets,} Function Algebras, (Proc. Inter. Symp. on Function Algebras, Tulane Univ, 1965, Scott Foresman, Chicago, 149-152 (1966).
\bibitem{Chi} Kieu Phuong Chi, {\it Kallin's lemma for rational convexity}, Acta Math. Vietnam. 34 (2009), no. 3, 345-350.
\bibitem{Chi2}Kieu Phuong Chi and Nguyen Quang Dieu, {\it Function algebras on bidisks}, Bull. Korean Math. Soc. 49 (2012), no. 2, 235-247.
\bibitem{D1} Nguyen Quang Dieu, {\it Local polynomial convexity of tangentials union of totally real graphs
in $\bold C^2$}, Indag. Math. {\bf 10}, 349-355, (1999).

\bibitem{DP}Nguyen Quang Dieu and P.J. de Paepe, {\it Function algebras on disks,} Complex Variables. {\bf 47}, 447- 451 (2002).
\bibitem{DC} Nguyen Quang Dieu and Kieu Phuong Chi, {\it Function algebras on disks II,} Indag. Math. {\bf 17}, (2006).
 \bibitem{Ga} T.W. Gamelin, {\it Uniform Agebras}, Prentice-Hall (1984).
\bibitem{OP1} A. G. O'Farrell and K.J. Preskenis, {\it Uniform approximation by polynoimials in two functions,} Math Ann.{\bf 284} 529-535, (1989).
\bibitem{OP2} A. G. O'Farrell and P.J. de Paepe,{\it Approximation on a disk II,}Math. Zeit. {\bf 212}, 153-156 (1993).
\bibitem{OS} A.G. O'Farrell and M.A. Sanabria-Garcia, {\it De Paepe's disc has nontrivial polynomial hull}, Bull. London Math. Soc. 34 (2002), no. 4, 490-494.
    \bibitem{OPW} A. G. O'Farrell, K.J. Preskenis and D. Walsh, {\it Holomorphic approximation in Lipschitz norms}, Contemp. Math., {\bf 32}(1984), 187-194.
\bibitem{P} P.J. de Paepe,{\it Approximation on disks,} Proc. Amer. Math. Soc. 97 (1986), no. 2, 299-302.
\bibitem{P1}P.J. de Paepe, {\it Algebras of continuous functions on disks,} Proc. of the R. Irish. Acad., {\bf 96A},85-90 (1996).
\bibitem{P2} P.J. de Paepe, {\it Approximation on a disk I,} Math. Zeit. {\bf 212}, 145-152 (1993).
\bibitem{P3} P.J. de Paepe, {\it Eva Kallin's lemma on polynomial convexity}, Bull. of London Math. Soc. {\bf 33}, 1-10  (2001).
\bibitem{PW} P.J. de Paepe and J. Wiegerinck, {\it More approximation on disks}, Complex Var. Elliptic Equ. 52 (2007), no. 7, 551-559.
\bibitem{Pre} K.J. Preskenis {\it Approximation on disks,} Tran. Amer. Math. Soc.{\bf 171}, 445-467, (1972).
\bibitem{St} E. L. Stout, {\it Polynomial convexity} Progress in Mathematics, 261. Birkh\'auser Boston,2007.
\bibitem{Stol} G. Stolzenberg, {\it Polynomially and rationally convex sets}, Acta Math. {\bf109}, 259-269, 1963.
\bibitem{W} J. Wermer, {\it Approximation on a disk,} Math Ann.{\bf 155} 331-333, (1964).

\end{thebibliography}
\end{document}